\theoremstyle{plain}
\newtheorem{thm}{Theorem}[section]
\theoremstyle{definition}
\newtheorem{definition}[thm]{Definition}
\newtheorem{remark}[thm]{Remark}
\def\bull{{\hbox{\bf .}}}
\def\nbull{{\kern.8pt\hbox{\bf .}\kern.8pt}}
\def\newdot{{\kern.8pt\cdot\kern.8pt}}
\def\,{\relax\ifmmode\mskip\thinmuskip\else\thinspace\fi}
\def\{{\relax\ifmmode\lbrace\else $\lbrace$\fi}
\def\}{\relax\ifmmode\rbrace\else $\rbrace$\fi}
\newcommand\E{\mathbb{E}}
\newcommand\N{\mathbb{N}}
\newcommand\R{\mathbb{R}}
\renewcommand\P{\mathbb{P}}
\newcommand{\SD}{{\mathscr D}}
\newcommand{\SW}{{\mathscr W}}
\def\mathpal#1{\mathop{\mathchoice{\text{\rm #1}}%
   {\text{\rm #1}}{\text{\rm #1}}%
   {\text{\rm #1}}}\nolimits}
\def\Ric{\mathpal{Ric}}
\def\grad{\mathpal{grad}}
\def\grad{\mathop{\rm grad}\nolimits}
\def\di{\displaystyle}
\def\f{\frac}
\def\a{\alpha }
\def\b{\beta }
\def\D{\Delta }
\def\d{\delta }
\def\G{\Gamma }
\def\g{\gamma }
\def\n{\nabla }
\def\om{\omega }
\begin{document}

\title[Horizontal diffusion in path space] {Horizontal diffusion in $C^1$ path
  space}

\author[M. Arnaudon]{Marc Arnaudon} \address{Laboratoire de Math\'ematiques et
  Applications\hfill\break\indent CNRS: UMR 6086\hfill\break\indent
  Universit\'e de Poitiers, T\'el\'eport 2 - BP 30179\hfill\break\indent
  F--86962 Futuroscope Chasseneuil Cedex, France}
\email{marc.arnaudon@math.univ-poitiers.fr}

\author[A.\,K. Coulibaly ]{Abdoulaye Coulibaly} \address{Laboratoire de
  Math\'ematiques et Applications\hfill\break\indent CNRS: UMR
  6086\hfill\break\indent Universit\'e de Poitiers, T\'el\'eport 2 - BP
  30179\hfill\break\indent F--86962 Futuroscope Chasseneuil Cedex, France}
\email{abdoulaye.coulibaly@math.univ-poitiers.fr}

\author[A. Thalmaier]{Anton Thalmaier} \address{Unit\'e de Recherche en
  Math\'ematiques\hfill\break\indent Universit\'e du Luxembourg, 162A, Avenue
  de la Fa\"\i encerie\hfill\break\indent L--1511 Luxembourg, Grand-Duchy of
  Luxembourg} \email{anton.thalmaier@uni.lu} 
\keywords{Brownian motion, damped parallel transport, horizontal diffusion, 
Monge-Kantorovich problem, Ricci curvature}

%
%

\begin{abstract}\noindent
  We define horizontal diffusion in $C^1$ path space over a Riemannian
  manifold and prove its existence. If the metric on the manifold is
  developing under the forward Ricci flow, horizontal diffusion along Brownian
  motion turns out to be length preserving.  
  As application, we prove contraction
  properties in the Monge-Kantorovich minimization problem for probability
  measures evolving along the heat flow.
  For constant rank diffusions, differentiating a family of coupled diffusions
  gives a derivative process with a covariant derivative of finite variation. 
  This construction provides an alternative method to filtering out redundant noise.
\end{abstract}

\maketitle
\tableofcontents

%
%

\section{Preliminaries}\label{Section1}
\setcounter{equation}0

The main concern of this paper is to answer the following question: Given a
second order differential operator $L$ without constant term on a manifold $M$ and
a $C^1$ path $u\mapsto \varphi(u)$ taking values in $M$, is it possible to
construct a one parameter family $X_t(u)$ of diffusions with generator $L$ and
starting point $X_0(u)=\varphi(u)$, such that the derivative with respect to
$u$ is locally uniformly bounded?  
If the manifold is $\R^n$ and the
generator $L$ a constant coefficient differential operator, there is an obvious solution: 
the family $X_t(u)=\varphi(u)+Y_t$, where $Y_t$ is an $L$-diffusion starting at $0$, 
has the required properties. 
But already on $\R^n$ with a non-constant generator, the question becomes difficult. 

In this paper we give a
positive answer for elliptic operators $L$ on general manifolds; the result also covers 
time-dependent elliptic generators $L=L(t)$. 
It turns out that the constructed family of diffusions solves the
ordinary differential equation in the space of semimartingales:
\begin{equation}
 \label{ODE}
\partial_uX_t(u)=W(X(u))_t(\dot\varphi(u)),
\end{equation}
where $W(X(u))$ is the so-called deformed parallel translation along the
semimartingale $X(u)$. 

The problem is similar to finding flows associated to
derivative processes as studied in \cite{Cruzeiro:83,
Cipriano-Cruzeiro:99, Cipriano-Cruzeiro:05, Driver:92,
Hsu:95, Gong-Zhang:07} and \cite{Fang-Luo:08}. However it is
transversal in the sense that in these papers diffusions with the same starting
point are deformed along a drift which vanishes at time~$0$. In contrast,
we want to move the starting point but to keep the generator. 
Our strategy of proof consists in iterating parallel couplings for closer and
closer diffusions. In the limit, the solution may be considered as an infinite
number of infinitesimally coupled diffusions. We call it horizontal
$L$-diffusion in $C^1$ path space.

If the generator $L$ is degenerate, we are able to solve~\eqref{ODE} only in the
constant rank case; by parallel coupling we construct a family of diffusions
satisfying~\eqref{ODE} at $u=0$. In particular, the derivative 
of $X_t(u)$ at $u=0$ has finite variation compared to parallel transport. 

Note that our construction requires only a connection on the fiber bundle 
generated by the ``carr\'e du champ'' operator. In the previous approach of
\cite{Elworthy-LeJan-Li:99}, a stochastic differential equation is needed and
$\n$ has to be the Le Jan-Watanabe connection associated to the SDE.

The construction of families of $L(t)$-diffusions $X_\bull(u)$ with $\partial_u
X_\bull(u)$ locally uniformly bounded has a variety of applications.  In 
Stochastic Analysis, for instance, it allows to deduce Bismut type formulas without 
filtering redundant noise. If only the derivative with respect to~$u$ at
$u=0$ is needed, parallel coupling as constructed
in~\cite{Arnaudon-Thalmaier-Wang:06} would be a sufficient tool. The horizontal
diffusion however is much more intrinsic by yielding a flow with 
the deformed parallel translation as derivative, 
well-suited to applications in the analysis of path space. 
Moreover for any $u$, the diffusion $X_\bull(u)$ generates the
same filtration as $X_\bull(0)$, and has the
same lifetime if the manifold is complete.

In Section~\ref{Section4} we use the horizontal diffusion to establish a
contraction property for the Monge-Kantorovich optimal transport between
probability measures evolving under the heat flow. 
We only assume that the cost function is a non-decreasing function of distance. 
This includes all Wasserstein distances with respect to the time-dependent Riemannian metric 
generated by the symbol of the generator $L(t)$. 
For a generator which is independent of time, the proof could be achieved using
simple parallel coupling. 
The time-dependent case however requires horizontal diffusion as a tool.

\section{Horizontal diffusion on $C^1$ path space}\label{Section2}
\setcounter{equation}0

Let $M$ be a complete Riemannian manifold with $\rho$ its Riemannian distance. 
The Levi-Civita connection on $M$ will be denoted by $\n$. 

Given a continuous semimartingale $X$ taking values in $M$, we denote by $d^\n X=dX$ its It\^o
differential and by $d_mX$ the martingale part of $dX$. In local coordinates,
\begin{equation}
\label{Itodiff}
d^\n X\equiv 
dX=\left(dX^i+\frac12\,\Gamma_{jk}^i(X)\,d{<}X^j,X^k{>}\right)\frac{\partial}{\partial x^i}
\end{equation}
where $\Gamma_{jk}^i$ are the Christoffel symbols of the Levi-Civita
connection on $M$. In addition, if $dX^i=dM^i+dA^i$ where $M^i$ is a local martingale
and $A^i$ a finite variation process, then 
$$
d_mX=dM^i\frac{\partial}{\partial x^i}.
$$
Alternatively, if 
$$P_{t}(X)\equiv P_{t}^M(X) : T_{X_0}M\to T_{X_t}M$$ denotes parallel translation 
along $X$, then
$$
dX_t=P_{t}(X)\,d\left(\int_0^\bull P_{s}(X)^{-1} \d X_s\right)_t
$$
and
$$
d_mX_t=P_{t}(X)\,dN_t
$$
where $N_t$ is the martingale part of the Stratonovich integral $\int_0^t
P(X)_{s}^{-1} \d X_s$.

If $X$ is a diffusion with generator $L$, we denote by $W(X)$ the so-called deformed
parallel translation along $X$. Recall that $W(X)_t$ is a linear map $T_{X_0}M\to
T_{X_t}M$, determined by the initial condition $W(X)_0={\rm Id}_{T_{X_0}M}$
together with the covariant It\^o stochastic differential equation:
\begin{equation}
\label{E03} DW(X)_t=-\f12\Ric^\sharp(W(X)_t)\, dt+\nabla_{W(X)_t}Z\,dt.
\end{equation}
By definition we have
\begin{equation}
\label{covder}
DW(X)_t=P_t(X)d\left(P_{\bull}(X)^{-1}W(X)\right)_t.
\end{equation}

Note that the It\^o differential~\eqref{Itodiff} and the parallel translation
require only a connection~$\n$ on~$M$. For the deformed parallel
translation~\eqref{E03} however the connection has to be adapted to a metric. 

In this Section the connection and the metric are independent of time. 
We shall see in Section~\ref{Section3} how these notions can be extended 
to time-dependent connections and metrics.

\begin{thm}
\label{T01}
Let\/ $\R\to M$, $u\mapsto \varphi(u)$, be a $C^1$ path in $M$ and let
$Z$ be a vector field on $M$.
Further let $X^0$ be
a diffusion with generator $L=\D/2+Z$, starting at~$\varphi(0)$, and lifetime $\xi$.  
There exists a unique
family $$u\mapsto (X_t(u))_{t\in[0,\xi[}$$ of diffusions with generator $L$,
almost surely continuous in $(t,u)$ and $C^{1}$ in $u$, satisfying $X(0)=X^0$,
$X_0(u)=\varphi(u)$ and
\begin{equation}
\label{PX}
\partial_u X_t(u)=W(X(u))_t(\dot\varphi(u)).
\end{equation}
Furthermore, the process $X(u)$ satisfies the It\^o stochastic differential equation
\begin{equation}
\label{PTX} dX_t(u)=P_{0,u}^{X_t(\newdot)}\,d_mX_t^0+Z_{X_t(u)}\,dt,
\end{equation}
where $P_{0,u}^{X_t(\newdot)} : T_{X_t^0}M\to T_{X_t(u)}M$ denotes parallel
transport along the $C^{1}$ curve $$[0,u]\to M,\quad v\mapsto X_t(v).$$
\end{thm}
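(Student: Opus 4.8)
The strategy is to construct the family $X_\bull(u)$ as the limit of a Picard-type iteration of parallel couplings over shrinking mesh, and to identify the limit via the SDE \eqref{PTX}. First I would fix a partition $0=u_0<u_1<\dots<u_n=u$ of $[0,u]$ and, inductively over $k$, define $X^{(k)}$ to be the parallel coupling of $X^{(k-1)}$ started at $\varphi(u_k)$: that is, $X^{(k)}$ is the diffusion with generator $L$ whose martingale part is the parallel transport, along the (time-$t$) minimal geodesic or more robustly along an auxiliary $C^1$ curve joining $X_t^{(k-1)}$ to $X_t^{(k)}$, of $d_mX^{(k-1)}_t$, plus the drift $Z\,dt$. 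This is precisely the coupling construction of \cite{Arnaudon-Thalmaier-Wang:06}, and for each fixed mesh it produces an $n$-tuple of $L$-diffusions. The key estimate is that the $M$-valued distance $\rho\bigl(X_t^{(k-1)},X_t^{(k)}\bigr)$ evolves, by It\^o's formula applied to $\rho$ and the second variation of arclength, as a semimartingale whose martingale part vanishes (the two coupled Brownian motions move the endpoints "in parallel") and whose drift is controlled by a curvature term; hence $\rho(X_t^{(k-1)},X_t^{(k)})$ is, up to the stopping times where geodesics stay unique, dominated by $e^{Ct}\rho(\varphi(u_{k-1}),\varphi(u_k))$ on compact sets. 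Summing over $k$ gives a uniform-in-mesh bound on $\sum_k \rho(X_t^{(k-1)},X_t^{(k)})$ by $e^{Ct}\,\mathrm{length}(\varphi|_{[0,u]})$, which is the compactness input.

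Next I would pass to the limit as the mesh is refined. The uniform length bound confines the whole family to a compact set (before the lifetime $\xi$, localizing by a sequence of stopping times $\xi_m\uparrow\xi$), and the parallel-coupling recursion shows the increments $X^{(k)}-X^{(k-1)}$ are Cauchy in a suitable semimartingale norm (e.g. the $\mathscr{S}^2$-type norm on the martingale parts together with uniform convergence of drifts), so the piecewise-defined processes converge to a process $u\mapsto X_t(u)$ continuous in $(t,u)$. That the limit is $C^1$ in $u$ with derivative $W(X(u))_t\dot\varphi(u)$ comes from differentiating the coupling relation: the parallel coupling of two infinitesimally close diffusions has, to first order, martingale part whose covariant derivative in $u$ vanishes and whose drift picks up exactly $-\tfrac12\Ric^\sharp + \nabla_\bull Z$, which is the defining equation \eqref{E03} of deformed parallel translation; so the difference quotients $(X_t(u+h)-X_t(u))/h$ converge to the solution of \eqref{E03}, i.e. to $W(X(u))_t\dot\varphi(u)$. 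Granting this, \eqref{PTX} is immediate: by construction the martingale part of $dX_t(u)$ is the parallel transport along $v\mapsto X_t(v)$ of $d_mX_t^0$ (this is the passage-to-the-limit of the discrete parallel couplings), and the finite-variation part is $Z_{X_t(u)}\,dt$ since the generator is kept equal to $L$ all along.

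Finally, uniqueness: if $\widetilde X_\bull(u)$ is another such family, then $Y_t(u):=\rho(X_t(u),\widetilde X_t(u))^2$ (or the distance between the two as processes) satisfies, using that both have the same martingale structure forced by \eqref{PX} and the same drift $Z$, a Gronwall-type inequality $Y_t(u)\le C\int_0^t Y_s(u)\,ds + (\text{term from } u\text{-integration})$; since $Y_0(u)=0$ this forces $Y\equiv 0$. Alternatively one integrates \eqref{PX} in $u$ and invokes uniqueness of solutions of the covariant ODE \eqref{E03} together with pathwise uniqueness for the diffusion $X(0)=X^0$.

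\medskip

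\textbf{Main obstacle.} The delicate point is the limit procedure near the cut locus and near the lifetime: the distance function $\rho$ is only Lipschitz, not smooth, along geodesics that cease to be minimal, so the It\^o formula for $\rho(X^{(k-1)},X^{(k)})$ must be handled with care (via the standard trick of comparing with smooth functions dominating $\rho$, or by working with the $C^1$ interpolating curve $v\mapsto X_t(v)$ built into the construction rather than with true geodesics). Equally, one must show the convergence is not destroyed as $t\uparrow\xi$; this is why the statement restricts to $t\in[0,\xi[$ and why one localizes along $\xi_m$. I expect the bulk of the technical work to be exactly this: establishing the uniform semimartingale estimates that make the iterated parallel couplings a Cauchy sequence, with all curvature terms controlled on the compacta cut out by the length bound.
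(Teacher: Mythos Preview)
Your overall strategy---iterated parallel coupling over a mesh in $u$, with a Cauchy estimate as the mesh goes to zero---is exactly the one the paper uses. However, several of the steps you label as routine are where the actual work lies, and one of your worries is misplaced.

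First, the Cauchy estimate does not follow from the pointwise bound $\rho(X_t^{(k-1)},X_t^{(k)})\le e^{Ct}\rho(\varphi(u_{k-1}),\varphi(u_k))$ alone. The paper's argument (its Step~2) writes $\partial_u X_t^\alpha(v)$ as $W(X^\alpha(v))_t\dot\varphi(v)$ plus an error term involving $\nabla_{\partial X^\alpha}P_{X^\alpha(v_\alpha),\cdot}$, and then estimates $\rho(X^\alpha_t(u),X^{\alpha'}_t(u))$ by integrating in $v$. The key ingredient you do not mention is a \emph{stability estimate for the deformed parallel translation}: if $X,Y$ are $L$-diffusions in a chart, then $\E\bigl[\sup_t\|W(X)_t-W(Y)_t\|^2\bigr]\le C\,\E\bigl[\sup_t\|X_t-Y_t\|^2\bigr]$. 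This is the paper's Step~1, and it drives both the Cauchy bound and the identification of the derivative in Step~4. Without it your ``increments are Cauchy in a suitable semimartingale norm'' remains unjustified.

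Second, \eqref{PTX} is not ``immediate by construction''. In the discrete scheme the martingale part of $dX^\alpha_t(u)$ is the parallel transport of $d_mX^0_t$ along the \emph{piecewise geodesic} through the mesh points $X^\alpha_t(k\alpha)$; passing this to parallel transport along the $C^1$ curve $v\mapsto X_t(v)$ requires a limit inside a stochastic integral. The paper avoids this by a different route: it first establishes \eqref{PX} (your difference-quotient argument, made rigorous via the $W$-stability lemma), then shows $u\mapsto\partial X(u)$ is continuous in the \emph{semimartingale} topology (by observing that all $\partial X(u)$ are martingales for the complete lift of an auxiliary connection, so uniform and semimartingale convergence coincide), and finally derives \eqref{PTX} from \eqref{PX} via the commutation identity $D\partial X=\nabla_u dX-\tfrac12 R(\partial X,dX)dX$. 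Your ``main obstacle'' concerning the cut locus is, by contrast, a non-issue: the paper localizes from the outset into a small chart where geodesics are unique and $\rho$ is smooth, so no comparison tricks are needed. Finally, for uniqueness the Gronwall argument runs in $u$, not in $t$: one integrates \eqref{PX} in $u$ (your ``alternatively'') and applies the $W$-stability lemma to get $\E[\sup_t\|X_t(u)-Y_t(u)\|^2]\le C\int_0^u\E[\sup_t\|X_t(v)-Y_t(v)\|^2]\,dv$.
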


\begin{definition}
\label{D1}
We call $t\mapsto \left(X_t(u)_{u\in \R}\right)$ the horizontal $L$-diffusion
in $C^1$ path space $C^1(\R,M)$ over $X^0$, starting at $\varphi$.
\end{definition}

\begin{remark}
  Given an elliptic generator $L$, we can always choose a metric $g$ on $M$ such
  that $L=\D/2+Z$ for some vector field $Z$ where $\D$ is the Laplacian with respect to $g$. 
  Assuming that $M$ is complete with respect to
  this metric, the assumptions of Theorem~\ref{T01} are fulfilled. In the
  non-complete case, a similar result holds with the only difference that
  the lifetime of $X_\bull(u)$ then possibly depends on $u$.
\end{remark}

\begin{remark}
  Even if $L=\D/2$, the solution we are looking for is not
  the flow of a Cameron-Martin vector field: firstly the
  starting point here is not fixed and secondly the vector field would have to
  depend on the parameter~$u$. 
  Consequently one cannot apply for instance Theorem~3.2 in~\cite{Hsu:95}. 
  An adaptation of the proof of the cited result
  would be possible, but we prefer to give a proof using
  infinitesimal parallel coupling which is more adapted to our
  situation.
\end{remark}

\begin{proof}[Proof of Theorem~\textup{\ref{T01}}]

Without loss of generality we may restrict ourselves to the case $u\ge 0$.

A. {\it Existence. }
Under the assumption that a solution $X_t(u)$ exists, we have for any stopping time $T$,  
$$
W_{T+t}(X(u))(\dot\varphi(u))=W_t(X_{T+\newdot }(u))\,(\partial X_T(u)),
$$
for $t\in {[0,\xi(\om)-T(\om)[}$ and $\om\in\{T<\xi\}$.
Here $\partial X_T:=(\partial X)_T$ denotes the derivative process $\partial X$ 
with respect to $u$, stopped at the random time $T$;
note that by Eq.~\eqref{PX}, $(\partial X_T)(u)=W(X(u))_T(\dot\varphi(u))$.
Consequently we may localize and replace the time interval $[0,\xi[$ by
$[0,\tau\wedge t_0]$ for some $t_0>0$, where $\tau$ is the first exit time of $X$ from
a relatively compact open subset $U$ of $M$ with smooth boundary.  

We may also assume that $U$ is sufficiently small and included in the domain of a local chart; 
moreover we can choose
$u_0\in{]0,1]}$ with $\int_0^{u_0}\|\dot\varphi(u)\|\,du$ small enough such that 
the processes constructed for $u\in [0,u_0]$ stay in the domain
$U$ of the chart. 
At this point we use the uniform boundedness of $W$ on $[0,\tau\wedge t_0]$.

For $\alpha >0$, we define by induction a family of processes 
$(X^\alpha_t(u))_{t\ge0}$ indexed by $u\ge 0$ as follows: 
$X^\alpha (0)=X^0$,
$X_0^\alpha(u)=\varphi(u)$, and if $u\in {]n\alpha ,(n+1)\alpha ]}$ for some
integer $n\ge 0$, $X^\alpha (u)$ satisfies the It\^o equation
\begin{equation}
\label{EPC}
dX_t^\alpha (u)=P_{X_t^\alpha (n\alpha ), X_t^\alpha
(u)}d_mX_t^\alpha (n\alpha )+Z_{X_t^\alpha (u)}\,dt,
\end{equation}
where $P_{x,y}$ denotes parallel translation along the minimal geodesic from
$x$ to $y$. We choose $\alpha$ sufficiently small so that all the minimizing
geodesics are uniquely determined and depend smoothly of the endpoints: since
$X\alpha(u)$ is constructed from $X\alpha(n\alpha)$ via parallel coupling~\eqref{EPC},
there exists a constant $C>0$ such that
\begin{equation}
\label{E27}
\rho(X_t^\a(u),X_t^\a(n\a))\le
\rho(X_0^\a(u),X_0^\a(n\a))\,e^{Ct}\le \|\dot\varphi\|_\infty\,
\a e^{Ct_0}
\end{equation}
(see e.g. \cite{Kendall:86}).

The process $\partial X^\alpha(u)$ satisfies the covariant It\^o stochastic
differential equation
\begin{equation}
\label{E21}
D\partial X^\alpha(u)=\nabla_{\partial X^\alpha(u)}P_{X^\alpha
  (n\alpha ),\nbull}d_mX_t^\alpha (n\alpha )+\nabla_{\partial X^\alpha(u)}Z\,dt -\f12\Ric^\sharp
(\partial X^\alpha(u))\, dt,
\end{equation}
(see \cite{Arnaudon-Thalmaier:03} Eq. (4.7), along with Theorem 2.2). 

\smallbreak
\noindent{\bf Step 1 } We prove that if $X$ and $Y$ are two $L$-diffusions 
stopped at $\tau_0:=\tau\wedge t_0$ and living in $U$, then there exists a constant $C$ such that
\begin{equation}
 \label{E22}
\E\left[\sup_{t\le \tau_0}\left\|W(X)_t-W(Y)_t\right\|^2\right]
\le C\,\E\left[\sup_{t\le \tau_0}\left\|X_t-Y_t\right\|^2\right].
\end{equation}
Here we use the Euclidean norm defined by the chart.

Writing
$$
L=a^{ij}\partial_{ij}+b^j\partial_j
$$
with $a^{ij}=a^{ji}$ for $i,j\in\{1,\ldots,\dim M\}$, and denoting by $(a_{ij})$ the inverse
of $(a^{ij})$, the connection $\nabla'$ with Christoffel symbols
$$
(\Gamma')_{ij}^k=-\f12(a_{ik}+a_{jk})b^k
$$
has the property that all $L$-diffusions are $\nabla'$-martingales.

On the other hand, for $\n'$-martingales $X$ and $Y$ living in $U$, with
$N^X$, respectively $N^Y$, their martingale parts in the chart $U$, It\^o's formula for
$\n'$-martingales yields
\begin{align*}
\big\langle &(N^X)^k-(N^Y)^k, (N^X)^k-(N^Y)^k\big\rangle_t\\
&=  (X_t^k-Y_t^k)^2-(X_0^k-Y_0^k)^2\\
&\quad-2\int_0^t(X_s^k-Y_s^k)\,d((N_s^X)^k-(N_s^Y)^k)\\
&\quad+\int_0^t(X_s^k-Y_s^k)\left((\G')_{ij}^k(X_s)\,d\langle(N^X)^i,(N^X)^j\rangle_s-(\G')_{ij}^k(Y_s)\,d\langle(N^Y)^i,(N^Y)^j\rangle_s\right).
\end{align*}
From there we easily prove that, for $U$ sufficiently small, there exists a
constant $C>0$ such that for all $L$-diffusions $X$ and $Y$ stopped at the
first exit time of $U$,
\begin{equation}
 \label{E24}
\E\left[ \langle N^X-N^Y\vert N^X-N^Y\rangle_{\tau_0}\right]\le C\E\left[\sup_{t\le \tau_0}\left\|X_t-Y_t\right\|^2\right]
\end{equation}
where $\langle N^X-N^Y\vert N^X-N^Y\rangle$ denotes the Riemannian quadratic
variation (see e.g.~\cite{Arnaudon-Thalmaier:98}).

Writing $W(X)=P(X)\left( P(X)^{-1}W(X)\right)$, an easy
calculation shows that in the local chart
\begin{align}
dW(X)=-&\G(X)(dX,W(X))-\f12(d\G)(X)(dX)(dX,W(X))\notag\\
{}+&\f12\G(X)(dX,\G(X)(dX,W(X)))-\f12\Ric^\sharp(W(X))\,dt+\n_{W(X)}Z\,
  dt.\label{E25}
\end{align}
We are going to use Eq.~\eqref{E25} to evaluate the difference $W(Y)-W(X)$. 
Along with the already established bound
\eqref{E24}, taking into account that $W(X)$, $W(Y)$ and the derivatives of the brackets
of $X$ and $Y$ are bounded in $U$, we can get a bound for 
$F(t):=\E\left[\sup_{s\le t\wedge
    \tau}\|W(Y)-W(X)\|^2\right]$. 
First an estimate of the type
$$
F(t)\le C_1\,\E\left[\sup_{s\le
    \tau_0}\left\|X_s-Y_s\right\|^2\right]+C_2\int_0^tF(s)\,ds,\quad0\le t\le t_0
$$
is derived which then by Gronwall's lemma leads to
\begin{equation}
 \label{E26}
F(t)\le C_1\,e^{C_2t}\,\E\left[\sup_{t\le \tau_0}\left\|X_t-Y_t\right\|^2\right].
\end{equation}
Letting $t=t_0$ in \eqref{E26} we obtain the desired bound \eqref{E22}.

\smallbreak
\noindent{\bf Step 2 } We prove that there exists $C>0$ such that for all $u\in [0,u_0]$, 
\begin{equation}
 \label{E23}
\E\left[\sup_{t\le \tau_0}\rho^2\left(X_t^\a(u),X_t^{\a'}(u)\right)\right]\le C(\a+\a')^2.
\end{equation}

From the covariant equation \eqref{E21} for $\partial X_t^\alpha(v)$ and the 
definition of deformed parallel translation \eqref{E03}, 
\begin{equation*}
DW(X)^{-1}_t=\f12\Ric^\sharp(W(X)^{-1}_t)\, dt-\nabla_{W(X)^{-1}_t}Z\,dt,
\end{equation*}
we have for $(t,v)\in[0,\tau_0]\times [0,u_0]$,
\begin{align*}
 W(X^\a(v))_t^{-1}\, \partial X_t^\alpha(v)&=\dot\varphi(v)+\int_0^tW(X^\a(v))_s^{-1}\n_{\partial
    X_s^\a(v)}P_{X_s^\a(v_\a),\nbull}d_mX_s^\a(v_\a), 
\end{align*}
or equivalently,
\begin{align}
  \partial X_t^\alpha(v)&=W(X^\a(v))_t\,\dot\varphi(v)\notag\\
&\quad+W(X^\a(v))_t\int_0^tW(X^\a(v))_s^{-1}\,\n_{\partial
    X_s^\a(v)}P_{X_s^\a(v_\a),\nbull}d_mX_s^\a(v_\a) \label{E35}
\end{align}
with $v_\a=n\a$, where the integer $n$ is determined by $n\a<v\le
(n+1)\a$.  Consequently, we have
\begin{align*}
  &\rho(X_t^\a(u),X_t^{\a'}(u))\\&=\int_0^u\left\langle d\rho,\left(\partial X_t^\alpha(v),\partial X_t^{\a'}(v)\right)\right\rangle \,dv\\
  &=\int_0^u\left\langle d\rho,\left(W(X^\a(v))_t\dot\varphi(v),W(X^{\a'}(v))_t\dot\varphi(v)\right)\right\rangle \,dv\\
  &+\int_0^u\left\langle d\rho,\left(
      W(X^\a(v))_t\int_0^tW(X^\a(v))_s^{-1}\n_{\partial
        X_s^\a(v)}P_{X_s^\a(v_\a),\nbull}d_mX_s^\a(v_\a)
      ,0\right)\right\rangle dv\\
  &+\int_0^u\left\langle d\rho,\left(0,
      W(X^{\a'}(v))_t\!\int_0^tW(X^{\a'}(v))_s^{-1}\n_{\partial
        X_s^{\a'}(v)}P_{X_s^{\a'}(v_{\a'}),\nbull}d_mX_s^{\a'}(v_{a'})\!
    \right)\right\rangle dv.
\end{align*}
This yields, by means of boundedness of $d\rho$ and deformed parallel translation,
together with~\eqref{E26} and the Burkholder-Davis-Gundy inequalities,
\begin{align*}
  \E\left[\sup_{t\le \tau_0}\rho^2\left(X_t^\a(u),X_t^{\a'}(u)\right)\right]
  &\le\  C\int_0^u\E\left[\sup_{t\le \tau_0}\rho^2\left(X_t^\a(v),X_t^{\a'}(v)\right)\right]\,dv\\
  &\quad+C\int_0^u\E\left[\int_0^{\tau_0}\left\|\n_{\partial X_s^\a(v)}P_{X_s^\a(v_\a),\nbull}\right\|^2\,ds\right]\,dv\\
  &\quad+C\int_0^u\E\left[\int_0^{\tau_0}\left\|\n_{\partial
        X_s^{\a'}(v)}P_{X_s^{\a'}(v_{\a'}),\nbull}\right\|^2\,ds\right]\,dv.
\end{align*}
From here we obtain
\begin{align*}
  \E\left[\sup_{t\le \tau_0}\rho^2\left(X_t^\a(u),X_t^{\a'}(u)\right)\right]
  &\le\ C\int_0^u\E\left[\sup_{t\le \tau_0}\rho^2\left(X_t^\a(v),X_t^{\a'}(v)\right)\right]\,dv\\
  &\quad+C\a^2\int_0^u\E\left[\int_0^{\tau_0}\left\|\partial X_s^\a(v)\right\|^2\,ds\right]\,dv\\
  &\quad+C{\a'}^2\int_0^u\E\left[\int_0^{\tau_0}\left\|\partial
      X_s^{\a'}(v)\right\|^2\,ds\right]\,dv,
\end{align*}
where we used the fact that for $v\in T_xM$, $\n_vP_{x,\nbull}=0$, together
with $$\rho(X_s^\b(v),X_s^\b(v_\b))\le C\b,\quad\b=\a,\a',$$
see estimate \eqref{E27}.

Now, by Eq.~\eqref{E21} for $D\partial X^\b$, there exists a
constant $C'>0$ such that for all $v\in [0,u_0]$,
$$
\E\left[\int_0^{\tau_0}\left\|\partial X_s^\b(v)\right\|^2\,ds\right]<C'.
$$
Consequently,
\begin{align*}
\E\left[\sup_{t\le
    \tau_0}\rho^2\left(X_t^\a(u),X_t^{\a'}(u)\right)\right]&\le C\int_0^u\E\left[\sup_{t\le
    \tau_0}\rho^2\left(X_t^\a(v),X_t^{\a'}(v)\right)\right]\,dv\\&\quad+2CC'(\a+\a')^2
\end{align*}
which by Gronwall lemma yields
$$
\E\left[\sup_{t\le
    \tau_0}\rho^2\left(X_t^\a(u),X_t^{\a'}(u)\right)\right]\le C\,(\a+\a')^2
$$
for some constant $C>0$. This is the desired inequality.

\smallbreak
\noindent{\bf Step 3 } 
From inequality \eqref{E23} we deduce that there exists a limiting process
$$(X_t(u))_{0\le t\le \tau_0,\ 0\le u\le u_0}$$
such that for all $u\in
[0,u_0]$ and $\a>0$,
\begin{equation}
 \label{E28}
\E\left[\sup_{t\le \tau_0}\rho^2\left(X_t^\a(u),X_t(u)\right)\right]\le C\a^2.
\end{equation}

In other words, for any fixed $u\in [0,u_0]$, the process $(X_t^\a(u))_{t\in [0,\tau_0]}$
converges to $(X_t(u))_{t\in [0,\tau_0]}$ uniformly in $L^2$ as~$\a$ tends
to~$0$. Since these processes are $\n'$-martingales, convergence also
holds in the topology of semimartingales. This implies in particular that for any $u\in
[0,u_0]$, the process $(X_t(u))_{t\in [0,\tau_0]}$ is a diffusion with generator $L$,
stopped at $\tau_0$. 

Extracting a subsequence $(\a_k)_{k\ge 0}$ convergent to
$0$, we may assume that almost surely, for all dyadic $u\in [0,u_0]$, $$\sup_{t\le
  \tau_0}\rho\left(X_t^\a(u),X_t(u)\right)$$ converges to $0$. 
Moreover we can choose $(\a_k)_{k\ge 0}$ of the 
form $\a_k={2^{-n_k}}$ with $(n_k)_{k\ge 0}$ an increasing sequence of positive integers. 
Due to~\eqref{E27}, we can
take a version of the processes $(t,u)\mapsto X_t^{\a_k}(u)$ such that
$$u\mapsto X_t^{\a_k}(u)$$ is uniformly Lipschitz in $u\in \N \a_k\cap [0,u_0]$ with a Lipschitz
constant independent of~$k$ and~$t$.  
Passing to the limit, we obtain that a.s for any $t\in [0,\tau_0]$, the map $u\mapsto X_t(u)$ 
is uniformly Lipschitz in $u\in \SD\cap [0,u_0]$ with a Lipschitz
constant independent of~$t$, where $\SD$ is the set of dyadic numbers. 
Finally we can choose a version of $(t,u)\mapsto X_t(u)$ which is a.s.~continuous 
in $(t,u)\in [0,\tau_0]\times [0,u_0]$, and hence uniformly Lipschitz in $u\in [0,u_0]$.

\smallbreak
\noindent{\bf Step 4 } We prove that almost surely,  
$X_t(u)$ is differentiable in $u$ with derivative $W(X(u))_t(\dot\varphi(u))$. 
More precisely, we show that in local coordinates, almost surely, for all $t\in [0,\tau_0]$, $u\in [0,u_0]$, 
\begin{equation}
 \label{E29}
X_t(u)=X_t^0+\int_0^u W(X(v))_t(\dot\varphi(v))\,dv.
\end{equation}

From the construction it is clear that almost surely, for all $t\in
[0,\tau_0]$, $u\in [0,u_0]$,
\begin{align*}
  X_t^{\a_k}&(u)=X_t^0+\int_0^u W(X^{\a_k}(v))_t(\dot\varphi(v))\,dv\\
  &+\int_0^u \left(W(X^{\a_k}(v))_t\int_0^t W(X^{\a_k}(v))_s^{-1}\n_{\partial
      X_s^{\a_k}(v)}P_{X_s^{\a_k}(v_{\a_k}),\nbull}d_mX_s^{\a_k}(v_{\a_k})\right)dv.
\end{align*}
This yields
\begin{align*}
  X_t&(u)-X_t^0-\int_0^uW(X(v))_t(\dot\varphi(v))\,dv\\
  &=X_t(u)-X_t^{\a_k}(u)+\int_0^u\left(W(X^{\a_k}(v))_t-W(X(v))_t\right)\dot\varphi(v)\,dv\\
  &\quad+\int_0^u \left(W(X^{\a_k}(v))_t\int_0^t W(X^{\a_k}(v))_s^{-1}\n_{\partial
      X_s^{\a_k}(v)}P_{X_s^{\a_k}(v_{\a_k}),\nbull}d_mX_s^{\a_k}(v_{\a_k})\right)dv.
\end{align*}
The terms of right-hand-side are easily estimated, where in the estimates 
the constant $C$ may change from one line to another. 
First observe that
$$
\E\left[\sup_{t\le\tau_0}\left\|X_t(u)-X_t^{\a_k}(u)\right\|^2\right]\le
C\a_k^2.
$$
Using \eqref{E22} and \eqref{E28} we have
\begin{align*}
  \E&\left[\sup_{t\le\tau_0}\left\|
      \int_0^u\left(W(X^{\a_k}(v))_t-W(X(v))_t\right)\,dv
    \right\|^2\right]\\
  &\quad\le \E\left[\sup_{t\le\tau_0}
    \int_0^u\left\|W(X^{\a_k}(v))_t-W(X(v))_t\right\|^2\,dv
  \right]\\
  &\quad=\int_0^u\E\left[\sup_{t\le\tau_0}
    \left\|W(X^{\a_k}(v))_t-W(X(v))_t\right\|^2
  \right]\,dv\le C\a_k^2,
\end{align*}
and finally
\begin{align*}
  &\E\left[\sup_{t\le\tau_0}\left\| \int_0^u \left(W(X^{\a_k}(v))_t\int_0^t
        W(X^{\a_k}(v))_s^{-1}\n_{\partial
          X_s^{\a_k}(v)}P_{X_s^{\a_k}(v_{\a_k}),\nbull}d_mX_s^{\a_k}(v_{\a_k})\right)dv
    \right\|^2\right]\\
  &\le C\int_0^u\E\left[ \sup_{t\le\tau_0}\left\|\int_0^t
      W(X^{\a_k}(v))_s^{-1}\n_{\partial
        X_s^{\a_k}(v)}P_{X_s^{\a_k}(v_{\a_k}),\nbull}d_mX_s^{\a_k}(v_{\a_k})\right\|^2
  \right]dv\\
  &\le C\a_k^2\int_0^u\E\left[\int_0^{\tau_0}\left\|\partial X_s^{\a_k}(v)\right\|^2\,ds\right]dv\le C\a_k^2.
\end{align*}
We deduce that
$$
\E\left[\sup_{t\le\tau_0}\left\| X_t(u)-X_t^0-\int_0^u
    W(X(v))_t(\dot\varphi(v))\,dv\right\|^2\right]\le C\a_k^2.
$$
Since this is true for any $\a_k$, using continuity in $u$ of $X_t(u)$, we
finally get almost surely for all $t,u$,
$$
X_t(u)=X_t^0+\int_0^u W(X(v))_t(\dot\varphi(v))\,dv.
$$

\smallbreak
\noindent{\bf Step 5 }
Finally we are able to prove Eq.~\eqref{PTX}: $$
dX_t(u)=P_{0,u}^{X_t(\newdot)}\,d_mX_t^0+Z_{X_t(u)}\,dt.$$

Since a.s.~the mapping $(t,u)\mapsto \partial X_t(u)$ is continuous, the map
$u\mapsto \partial X(u)$ is continuous in the topology of uniform convergence
in probability. We want to prove that $u\mapsto \partial X(u)$ is continuous
in the topology of semimartingales.  

Since for a given connection on a
manifold, the topology of uniform convergence in probability and the topology
of semimartingale coincide on the set of martingales (Proposition 2.10 of
\cite{Arnaudon-Thalmaier:98}), it is sufficient to find a connection on $TM$
for which $\partial X(u)$ is a martingale for any $u$.  
Again we can localize in the domain of a chart. Recall that for all $u$, 
the process $X(u)$ is a
$\nabla'$-martingale where $\n'$ is defined in step~1. 
Then by~\cite{Arnaudon:97}, Theorem~3.3, this implies that the derivative with respect
to $u$ with values in $TM$, denoted here by $\partial X(u)$, is a
$(\nabla')^c$-martingale with respect to the complete lift $(\nabla')^c$ of
$\nabla'$. This proves that $u\mapsto\partial X(u)$ is continuous in the
topology of semimartingales.

\begin{remark}
  Alternatively, one could have used that given a generator $L'$, the topologies
  of uniform convergence in probability on compact sets and the topology of
  semimartingales coincide on the set of $L'$-diffusions. Since 
  the processes $\partial X(u)$ are diffusions with the same generator, 
  the result could be derived as well.
\end{remark}

As a consequence, we have formally
\begin{equation}
\label{E1} D\partial X=\nabla_u dX-\frac12 R(\partial X,dX)dX.
\end{equation}
Since
$$
dX(u)\otimes dX(u)=g^{-1}(X(u))\,dt
$$
where $g$ is the metric tensor, Eq.~\eqref{E1} becomes
$$
D\partial X=\nabla_u dX-\f12\Ric^\sharp(\partial X)\,dt.
$$
On the other hand, Eq.~\eqref{PX} and Eq.~\eqref{E03} for $W$ yield
$$
D\partial X =-\f12\Ric^\sharp(\partial X)\, dt+\nabla_{\partial X}Z\,dt.
$$
From the last two equations we obtain
$$
\nabla_u dX=\nabla_{\partial X}Z\,dt.
$$
This along with the original equation
$$
dX^0 =d_mX^0+Z_{X^0}\,dt
$$
gives
$$
dX_t(u)=P_{0,u}^{X_t(\newdot)}\,d_mX_t^0+Z_{X_t(u)}\,dt,
$$
where $$P_{0,u}^{X_t(\newdot)}\colon\, T_{X_t}M\to T_{X_t(u)}M$$ denotes parallel
transport along the $C^1$ curve $v\mapsto X_t(v)$.

\smallbreak
B. {\it Uniqueness. }
Again we may localize in the domain of a chart $U$.  Letting $X(u)$ and $Y(u)$
be two solutions of Eq.~\eqref{PX}, then for $(t,u)\in
[0,\tau_0[\times [0,u_0]$ we find
in local coordinates,
\begin{equation}
 \label{E33}
Y_t(u)-X_t(u)=\int_0^u\big(W(Y(v))_t-W(X(v))_t\big)(\dot\varphi(v))\,dv.
\end{equation}
On the other hand, using \eqref{E22} we have
\begin{equation}
 \label{E34}
\E\left[\sup_{t\le \tau_0} \|Y_t(u)-X_t(u)\|^2\right]
\le C\int_0^u\E\left[\sup_{t\le \tau_0}\|Y_t(v)-X_t(v)\|^2\right]\,dv
\end{equation}
from which we deduce that almost surely, for all $t\in [0,\tau_0]$,
$X_t(u)=Y_t(u)$. Consequently, exploiting the fact that the two processes are continuous in
$(t,u)$, they must be indistinguishable.  
\end{proof}

\section{Horizontal diffusion along non-homogeneous diffusion}\label{Section3}
\setcounter{equation}0 

In this Section we assume that the elliptic generator
is a $C^1$ function of time: $L=L(t)$ for $t\ge 0$.  
Let $g(t)$ be the metric on $M$ such that
$$
L(t)=\f12\D^t+Z(t)
$$
where $\D^{t}$ is the $g(t)$-Laplacian and $Z(t)$ a vector field on $M$.

Let $(X_t)$ be an inhomogeneous diffusion with generator $L(t)$. Parallel transport
$P^t(X)_t$ along the $L(t)$-diffusion $X_t$ is defined analogously to
\cite{Arnaudon-Coulibaly-Thalmaier:08} as the linear map
\begin{align*}
  P^t(X)_t\colon T_{X_0}M&\to T_{X_t}M
\end{align*}
which satisfies
\begin{equation}
\label{defPt}
 D^{t}P^t(X)_t=-\f12\,\dot g^\sharp (P^t(X)_t)\,dt
\end{equation}
where $\dot g$ denotes the derivative of $g$ with respect to time; the covariant differential $D^t$ is
defined in local coordinates by the same formulas as $D$, with the only
difference that Christoffel symbols now depend on $t$. 

Alternatively, if $J$ is a
semimartingale over $X$, the covariant differential 
$D^tJ$ may be defined as $\tilde D(0,J)=(0,D^tJ)$,
where $(0,J)$ is a semimartingale along $(t,X_t)$ in $\tilde M=[0,T]\times M$ endowed
with the connection $\tilde \n$ defined as follows: if $$s\mapsto \tilde
\varphi(s)=(f(s),\varphi(s))$$ is a $C^1$ path in $\tilde M$ and $s\mapsto
\tilde u(s)=(\a(s),u(s))\in T\tilde M$ is $C^1$ path over~$\tilde \varphi$,
then
$$
\tilde \n \tilde u(s)=\left(\dot\alpha(s),\big(\n^{f(s)}u\big)(s)\right)
$$
where $\n^t$ denotes the Levi-Civita connection associated to $g(t)$.  It is
proven in \cite{Arnaudon-Coulibaly-Thalmaier:08} that $P^t(X)_t$ is an
isometry from $(T_{X_0}M, g(0,X_0))$ to $(T_{X_t}M, g(t,X_t))$.

The damped parallel translation $W^t(X)_t$ along $X_t$ is the linear map
\begin{align*}
  W^t(X)_t : T_{X_0}M&\to T_{X_t}M
\end{align*}
satisfying
 \begin{equation}
\label{defWt}
 D^{t}W^t(X)_t=\left(\n^t_{W^t(X)_t}Z(t,\newdot)-\f12 (\Ric^t)^\sharp (W^t(X)_t)\right)\,dt.
\end{equation}

If $Z\equiv 0$ and $g(t)$ is solution to the backward Ricci flow:
\begin{equation}
 \label{Rf}
\dot g=\Ric,
\end{equation}
then damped parallel translation coincides with the usual parallel translation:
$$P^{t}(X)=W^{t}(X),$$ (see \cite{Arnaudon-Coulibaly-Thalmaier:08} Theorem~2.3).

The It\^o differential $d^\n Y=d^{\n^t}Y$ of an $M$-valued semimartingale $Y$ is 
defined by formula~\eqref{Itodiff}, with the only
difference that the Christoffel symbols depend on time.

\begin{thm}
\label{T02} Keeping the assumptions of this Section,
let $$\R\to M,\quad u\mapsto \varphi(u),$$ be a $C^1$ path in $M$ and let $X^{0}$
be an $L(t)$-diffusion with starting point $\varphi(0)$ and lifetime $\xi$.
Assume that $(M,g(t))$ is complete for every $t$.  
There exists a
unique family $$u\mapsto (X_t(u))_{t\in[0,\xi[}$$ of $L(t)$-diffusions, which is
a.s.~continuous in $(t,u)$ and $C^{1}$ in $u$, satisfying $$X(0)=X^{0}\ \text{ and }\
X_0(u)=\varphi(u),$$ and solving the equation
\begin{equation}
\label{gPX}
\partial X_t(u)=W^{t}(X(u))_t(\dot\varphi(u)).
\end{equation}
Furthermore, $X(u)$ solves the It\^o stochastic differential equation
\begin{equation}
\label{gPTX} d^{\n}X_t(u)=P_{0,u}^{t,X_t(\newdot)}\,d^{\n(t)}X_t+ Z(t,X_t(u))\, dt,
\end{equation}
where $$P_{0,u}^{t,X_t(\newdot)} : T_{X_t^0}M\to T_{X_t(u)}M$$ denotes 
parallel transport along the $C^{1}$ curve $[0,u]\to M$, $v\mapsto X_t(v)$, 
with respect to the metric $g(t)$.

If $Z\equiv 0$ and if $g(t)$ is given as solution to the backward Ricci flow equation,
then almost surely for all $t$,
\begin{equation}
 \label{isom}
\left\|\partial X_t(u)\right\|_{g(t)}=\left\|\dot\varphi(u)\right\|_{g(0)}.
\end{equation}
\end{thm}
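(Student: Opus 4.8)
I read ``the final statement'' as Theorem~\ref{T02}. The plan is to rerun the proof of Theorem~\ref{T01}, with every object replaced by its time-dependent analogue, in order to obtain~\eqref{gPX} and~\eqref{gPTX}, and then to read off~\eqref{isom} from the structure of $P^t(X)$ recalled above. As in the proof of Theorem~\ref{T01} I would first localize, replacing $[0,\xi[$ by $[0,\tau_0]$ with $\tau_0=\tau\wedge t_0$, where $\tau$ is the first exit time of $X^0$ from a relatively compact chart domain $U$, and choosing $u_0$ so small that the coupled processes stay in $U$. For $\a>0$ I would define $X^\a(u)$ by iterated parallel coupling: on $u\in{]n\a,(n+1)\a]}$,
$$d^{\n}X_t^\a(u)=P^{t}_{X_t^\a(n\a),X_t^\a(u)}\,d^{\n(t)}X_t^\a(n\a)+Z(t,X_t^\a(u))\,dt,$$
where $P^t_{x,y}$ denotes parallel transport along the minimal $g(t)$-geodesic from $x$ to $y$ at the current time~$t$. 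The contraction estimate~\eqref{E27}, the bound~\eqref{E22} for $W^t(X)-W^t(Y)$ (using the local-chart expansion~\eqref{E25} amended by the $-\f12\dot g^\sharp$ term of~\eqref{defWt}), the estimate~\eqref{E23}, and the limiting Steps~3--4 all go through essentially verbatim, since on $[0,t_0]\times\bar U$ the time-dependent Christoffel symbols, $(\Ric^t)^\sharp$, $\dot g$ and their first spatial derivatives are uniformly bounded in~$t$ (this uses that $L(\cdot)$ is $C^1$ in time). This yields the $L^2$-limit $X_t(u)$, an $L(t)$-diffusion for each $u$, Lipschitz and then $C^1$ in $u$, and satisfying $X_t(u)=X_t^0+\int_0^uW^t(X(v))_t(\dot\varphi(v))\,dv$ in the chart, which is~\eqref{gPX}.

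Equation~\eqref{gPTX} is then the time-dependent analogue of Step~5: from $D^t\partial X=\n^t_u\,d^\n X-\f12(\Ric^t)^\sharp(\partial X)\,dt$ together with~\eqref{defWt} and $\partial X=W^t(X)\dot\varphi$ one gets $\n^t_u\,d^\n X=\n^t_{\partial X}Z(t,\newdot)\,dt$, hence~\eqref{gPTX}; uniqueness is the analogue of part~B of the proof of Theorem~\ref{T01}, via~\eqref{E33}--\eqref{E34}. One may equally well organize the computation on $\tilde M=[0,T]\times M$ with the connection $\tilde\n$ introduced above, working with the pair $(t,X_t(u))$.

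For~\eqref{isom}, assume $Z\equiv 0$ and $\dot g=\Ric$. Then~\eqref{defPt} and~\eqref{defWt} become the \emph{same} covariant equation $D^tA_t=-\f12\dot g^\sharp(A_t)\,dt$ with $A_0=\id$, and this equation has pathwise unique solutions, so $W^t(X(u))=P^t(X(u))$ almost surely for all~$t$; this is Theorem~2.3 of~\cite{Arnaudon-Coulibaly-Thalmaier:08}, where it is also shown that $P^t(X(u))_t$ is an isometry from $(T_{X_0}M,g(0,X_0))$ onto $(T_{X_t}M,g(t,X_t))$. Combining this with~\eqref{gPX} gives, for every $u$ and almost surely for all $t$,
$$\|\partial X_t(u)\|_{g(t)}=\|P^t(X(u))_t\,\dot\varphi(u)\|_{g(t)}=\|\dot\varphi(u)\|_{g(0)},$$
which is~\eqref{isom}; the exceptional null set can be chosen independent of $t$ since $t\mapsto\partial X_t(u)$, $t\mapsto W^t(X(u))_t$ and $t\mapsto P^t(X(u))_t$ are a.s.\ continuous.

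The genuinely new work is confined to the first paragraph, namely checking that~\eqref{E22} and~\eqref{E23} survive the passage to time-dependent Christoffel symbols and the extra drift $-\f12\dot g^\sharp$ in the definition of damped parallel translation. Once one is prepared to localize in time as well as in space this is routine --- all the relevant coefficients and their first derivatives are bounded on $[0,t_0]\times\bar U$ --- but it is the step that must be written out with care. The isometry~\eqref{isom} itself is then immediate from the cited structural properties of $P^t(X)$.
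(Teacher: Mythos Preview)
Your proposal is correct and follows essentially the same route as the paper: rerun the five steps of Theorem~\ref{T01} with time-dependent objects (the paper leans a bit more on the $\tilde M=[0,T]\times M$ formulation, and obtains the analogue of~\eqref{E27} via It\^o's formula for $\rho(t,\cdot,\cdot)$ together with $\partial_t\rho(t,x,y)\le C\rho(t,x,y)$, but the content is the same), then read off~\eqref{isom} from $W^t=P^t$ under backward Ricci flow combined with the isometry property of $P^t$. One small slip: the $-\tfrac12\dot g^\sharp$ drift sits in~\eqref{defPt}, not in~\eqref{defWt}, so the amendment to~\eqref{E25} consists only of time-dependent Christoffel symbols, $(\Ric^t)^\sharp$ and $\nabla^t Z(t,\cdot)$, with no extra $\dot g$ term.
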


\begin{definition}
\label{D2}
We call $$t\mapsto (X_t(u))_{u\in \R}$$ the horizontal $L(t)$-diffusion in
$C^1$ path space $C^1(\R,M)$ over $X^{0}$, started at $\varphi$.
\end{definition}

\begin{remark}
Eq.~\eqref{isom} says that if $Z\equiv 0$ and if $g$ is solution to the
backward Ricci flow equation, then the horizontal $g(t)$-Brownian motion is length
preserving (with respect to the moving metric).
\end{remark}

\begin{remark}
Again if the manifold $(M,g(t))$ is not necessarily complete for all $t$, a similar
  result holds with the lifetime of $X_\bull(u)$ possibly depending on~$u$.
\end{remark}

\begin{proof}[Proof of Theorem~\textup{\ref{T02}}]
The proof is similar to the one of Theorem~\ref{T01}. 
We restrict ourselves to explaining the differences.

The localization procedure carries over immediately; we work on the time interval
$[0,\tau\wedge t_0]$.
For $\a>0$, we define the approximating process $X_t^{\a}(u)$ by induction as
$$X_t^{\a}(0)=X_t^0,\quad X_0^{\a}(u)=\varphi(u),$$ and if $u\in{]n\a,(n+1)\a]}$ for
some integer $n\ge 0$, then $X^{\a}(u)$ solves the It\^o equation
\begin{equation}
\label{IeXalpha}
d^{\n}X_t^{\a}(u)=P^{t}_{X_t^{\a}(n\a),X_t^{\a}(u)}d_mX_t^{\a}(n\a)+Z(t,X_t(u))\,dt
\end{equation}
where $P_{x,y}^{t}$ is the parallel transport along the minimal geodesic from
$x$ to $y$, for the connection $\n^t$.

Alternatively, letting $\tilde X_t^{\a}=(t,X_t^{\a})$, we may
write~\eqref{IeXalpha} as
\begin{equation}
\label{IetildeXalpha}
d^{\tilde \n}\tilde X_t^{\a}(u)
=\tilde P_{\tilde X_t^{\a}(n\a),\tilde X_t^{\a}(u)}d_m\tilde X_t^{\a}(n\a)+ Z(\tilde X_t^\a(u))\,dt
\end{equation}
where $\tilde P_{\tilde x,\tilde y}$ denotes parallel translation along the
minimal geodesic from $\tilde x$ to $\tilde y$ for the connection $\tilde \n$.

Denoting by $\rho(t,x,y)$ the distance from $x$ to $y$ with respect to the
metric $g(t)$,  It\^o's formula shows that the process $\di
\rho\left(t,X_t^{\a}(u),X_t^{\a}(n\a)\right)$ has locally bounded variation.
Moreover since locally $\partial_t\rho(t,x,y)\le C\rho(t,x,y)$ for $x\not=y$,
we find similarly to~\eqref{E27},
\begin{equation*}
\rho(t, X_t^\a(u),X_t^\a(n\a))\le
\rho(0,X_0^\a(u),X_0^\a(n\a))e^{Ct}\le \|\dot\varphi\|_\infty\,
\a e^{Ct_0}.
\end{equation*}
Since all Riemannian distances are locally equivalent, this implies
\begin{equation}
\label{E27c}
\rho(X_t^\a(u),X_t^\a(n\a))\le
\rho(X_0^\a(u),X_0^\a(n\a))e^{Ct}\le \|\dot\varphi\|_\infty\,
\a e^{Ct_0}
\end{equation}
where $\rho=\rho(0,\newdot,\newdot)$.

Next, differentiating Eq.~\eqref{IetildeXalpha} yields
\begin{align*}
  \tilde D\partial_u \tilde X_t^{\a}(u)
&=\tilde\n_{\partial_u \tilde X_t^{\a}(u)}\tilde P_{\tilde X_t^{\a}(n\a),\nbull}d_m\tilde X_t^{\a}(n\a)\\
  &\quad+\tilde \n_{\partial_u\tilde X_t^\a(u)}Z\,dt -\f12\tilde R\left(\partial_u
    \tilde X_t^{\a}(u),d\tilde X_t^{\a}(u)\right)d\tilde X_t^{\a}(u).
\end{align*}
Using the fact that the first component of $\tilde X_t^{\a}(u)$ has
finite variation, a careful computation of $\tilde R$ leads to the equation
\begin{align*}
  D^{t}\partial_u X_t^{\a}(u)&=\n^t_{\partial_u  X_t^{\a}(u)} P^{t}_{ X_t^{\a}(n\a),\nbull}d_m X_t^{\a}(n\a)\\
  &\quad+
  \n^t_{\partial_uX_t^\a(u)}Z(t,\newdot)-\f12(\Ric^{t})^\sharp\big(\partial_u
    X_t^{\a}(u)\big)\,dt.
\end{align*}

To finish the proof, it is sufficient to remark that in step~1, 
Eq.~\eqref{E24} still holds true for $X$ and $Y$ $g(t)$-Brownian
motions living in a small open set $U$, and that in step~5, the map $u\mapsto \partial X(u)$
is continuous in the topology of semimartingales. This last point is due to the fact that all
$\partial X(u)$ are inhomogeneous diffusions with the same generator, say
$L'$, and the fact that the topology of uniform convergence on compact sets and the topology of
semimartingales coincide on $L'$-diffusions.  
\end{proof}

\section{Application to optimal transport}\label{Section4}
\setcounter{equation}0 
In this Section we assume again that the elliptic generator
$L(t)$ is a $C^1$ function of time with associated metric $g(t)$:
$$
L(t)=\f12\D^t+Z(t)
$$
where $\D^t$ is the Laplacian associated to $g(t)$ and $Z(t)$ is a vector
field. We assume further that for any $t$, the Riemannian manifold $(M,g(t))$ is
metrically complete, and $L(t)$ diffusions have infinite lifetime. 

Letting $\varphi\colon
\R_+\to \R_+$ be a non-decreasing function, we define a cost function
\begin{equation}
 \label{cost}
c(t,x,y)=\varphi(\rho(t,x,y))
\end{equation}
where $\rho(t,\newdot,\newdot)$ denotes distance with respect to $g(t)$. 

To the
cost function $c$ we associate the Monge-Kantorovich minimization between two
probability measures on $M$
\begin{equation}
 \label{Wc}
\SW_{c,t}(\mu,\nu)=\inf_{\eta\in\Pi(\mu,\nu)}\int_{M\times M}c(t,x,y)\,d\eta (x,y)
\end{equation}
where $\Pi(\mu,\nu)$ is the set of all probability measures on $M\times M$
with marginals $\mu$ and $\nu$. 
We denote
\begin{equation}
 \label{Wp}
\SW_{p,t}(\mu,\nu)=\left(\SW_{\rho^p,t}(\mu,\nu)\right)^{1/p}
\end{equation}
the Wasserstein distance associated to $p>0$.

For a probability measure $\mu$ on $M$, the solution of the heat flow equation
associated to $L(t)$ will be denoted by $\mu P_t$.
 
Define a section $(\n^t Z)^\flat\in\G(T^\ast M\odot T^\ast M)$ as follows: for any $x\in M$ and
$u,v\in T_xM$,
$$
(\n^t Z)^\flat(u,v) =\f12 \left(g(t)(\n^t_u
    Z,v)+g(t)(u,\n^t_v Z)\right).
$$
In case the metric does not depend on $t$ and $Z=\grad V$ for some $C^2$
function $V$ on~$M$, then
$$
(\n^t Z)^\flat(u,v)=\n dV(u,v).
$$

\begin{thm}
 \label{WDF} 
We keep notation and assumptions from above.

{\rm a)} Assume
\begin{equation}
 \label{gRb}
\Ric^t-\dot g- 2(\n^t Z)^\flat\ge 0.
\end{equation}
Then the function
\begin{equation*}
t\mapsto \SW_{c,t}(\mu P_t, \nu P_t)
\end{equation*}
is non-increasing.\smallskip

{\rm b)} If for some $k\in\R$,
\begin{equation}
 \label{gRbk}
\Ric^t-\dot g-2(\n^t Z)^\flat\ge kg, 
\end{equation}
then we have for all $p>0$
\begin{equation*}
\SW_{p,t}(\mu P_t,\nu P_t)\le e^{-kt/2}\,\SW_{p,0}(\mu,\nu).
\end{equation*}
\end{thm}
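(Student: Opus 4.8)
The plan is to use the horizontal $L(t)$-diffusion from Theorem~\ref{T02} as a coupling device that moves an optimal transport plan for the initial measures along the heat flow, and then to differentiate the cost along this coupling. First I would fix $t_0>0$ and choose $\eta_0\in\Pi(\mu,\nu)$ optimal for $\SW_{c,0}$; it suffices to show that $s\mapsto\SW_{c,s}(\mu P_s,\nu P_s)$ has nonpositive upper right derivative at every $s$, so by translating the time origin I may work near $s=0$. Pick $(X^0_0,Y^0_0)$ distributed according to $\eta_0$, let $X^0$ be the $L(t)$-diffusion from $X^0_0$, and for $g(0)$-almost-every realization connect $X^0_0$ to $Y^0_0$ by a $g(0)$-minimizing geodesic $u\mapsto\varphi(u)$, $u\in[0,1]$ (measurably in the endpoints, using that cut loci are null). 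Running the horizontal $L(t)$-diffusion over $X^0$ with initial path $\varphi$ gives a family $X_t(u)$ with $X_t(0)=X^0_t$; set $Y^0_t:=X_t(1)$. Then $(X^0_t,Y^0_t)$ is a coupling of $\mu P_t$ and $\nu P_t$, so $\SW_{c,t}(\mu P_t,\nu P_t)\le\E[c(t,X^0_t,Y^0_t)]$, with equality at $t=0$.

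The heart of the argument is to estimate $\frac{d}{dt}\E[\,\varphi(\rho(t,X^0_t,Y^0_t))\,]$ from above, which by the (almost everywhere in $t$, a.s.) differentiability and the monotonicity of $\varphi$ reduces to controlling $\frac{d}{dt}\rho(t,X^0_t,Y^0_t)$. I would write $\ell(t):=\rho(t,X_t(0),X_t(1))\le\int_0^1\|\partial_uX_t(u)\|_{g(t)}\,du$ and, better, introduce for each $t$ the $g(t)$-minimizing geodesic joining $X_t(0)$ to $X_t(1)$ and compute the Itô differential of its $g(t)$-length $\mathcal L(t)$. There are three contributions: the explicit $\partial_t$ of the metric, which produces $-\frac12\int\dot g(\dot c,\dot c)$ along the geodesic $c$; the martingale part of $dX_t(u)$, which is parallel-transported in $u$ by Eq.~\eqref{gPTX}, hence contributes to the drift only through the second-order (Itô) correction, giving an index-form/Bochner term $-\frac12\int\Ric^t(\dot c,\dot c)$ together with the bracket term $dX\otimes dX=g(t)^{-1}dt$; and the drift $Z(t,\cdot)\,dt$ in \eqref{gPTX}, whose contribution to $d\mathcal L$ is $\int g(t)(\n^t_{\dot c}Z,\dot c)\,dt=\int(\n^tZ)^\flat(\dot c,\dot c)\,dt$. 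Collecting these (this is essentially the second-variation-of-arclength computation, streamlined because the martingale parts at the two ends are \emph{the same} transported noise, so the genuinely stochastic terms cancel in the difference and only Itô corrections survive), one obtains an inequality of Bochner type,
\begin{equation*}
d^{\,}\mathcal L(t)\ \le\ -\frac12\int_c\Big(\Ric^t-\dot g-2(\n^tZ)^\flat\Big)(\dot c,\dot c)\,dt\ +\ dM_t
\end{equation*}
for a local martingale $M_t$, where the integral is along the unit-speed $g(t)$-geodesic from $X_t(0)$ to $X_t(1)$. The main obstacle, and the step I would spend the most care on, is making this rigorous when the two endpoints fall in each other's cut locus: there $\rho(t,\cdot,\cdot)$ is only semiconcave, not smooth, so I would either use the standard trick of bounding $\rho$ above by the length of the \emph{fixed} time-$t_0$ geodesic evaluated in the metric $g(t)$ (a smooth supersolution that touches at $t=t_0$), or invoke a Kendall–Cranston-type argument showing the singular part of $d\rho$ has the favorable sign.

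Given the differential inequality, part (a) is immediate: under \eqref{gRb} the drift of $\mathcal L(t)$ is $\le 0$, so $t\mapsto\E[\mathcal L(t)]$ is non-increasing after discarding the martingale (localize by stopping times $T_n\uparrow\infty$, using the assumed infinite lifetime and completeness, plus a uniform-integrability/Fatou argument since $\mathcal L\ge 0$); since $\varphi$ is non-decreasing, $\E[\varphi(\mathcal L(t))]\le\E[\varphi(\mathcal L(0))]=\SW_{c,0}(\mu,\nu)$, and optimizing over the initial plan and taking infimum over couplings at time $t$ gives $\SW_{c,t}(\mu P_t,\nu P_t)\le\SW_{c,0}(\mu,\nu)$; reapplying with an arbitrary time origin gives monotonicity on all of $\R_+$. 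For part (b), condition \eqref{gRbk} turns the differential inequality into $d\mathcal L(t)\le-\frac{k}{2}\mathcal L(t)\,dt+dM_t$, whence $\E[\mathcal L(t)]\le e^{-kt/2}\E[\mathcal L(0)]$ by Gronwall after localization; applying this with the cost $\rho^p$ in place of $\varphi$ — i.e. bounding $\E[\rho(t,X^0_t,Y^0_t)^p]^{1/p}\le e^{-kt/2}\E[\rho(0,X^0_0,Y^0_0)^p]^{1/p}$ via $\frac{d}{dt}\mathcal L^p=p\,\mathcal L^{p-1}d\mathcal L\le-\frac{kp}{2}\mathcal L^p\,dt+(\text{mart.})$ — and then starting from a $\SW_{p,0}$-optimal plan yields $\SW_{p,t}(\mu P_t,\nu P_t)\le e^{-kt/2}\SW_{p,0}(\mu,\nu)$.
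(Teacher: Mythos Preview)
You set up the coupling correctly via the horizontal diffusion of Theorem~\ref{T02}, but then take a detour that creates real difficulties where the paper's argument needs none. The bound you mention in passing and then discard as ``better'' replaced by the geodesic length is in fact the whole proof: $\partial_u X_t(u)=W^t(X(u))_t\,\dot\varphi(u)$, and $W^t$ obeys the \emph{pathwise} linear equation~\eqref{defWt} with no martingale part. Differentiating $g(t)(W^t,W^t)$ in $t$ produces exactly $\big(\dot g + 2(\nabla^t Z)^\flat - \Ric^t\big)(W^t,W^t)\,dt$, so under~\eqref{gRb} one has $\|W^t(X(u))_t\|_t\le 1$ almost surely, hence
\[
\rho\big(t,X_t(0),X_t(1)\big)\ \le\ \int_0^1\|\partial_uX_t(u)\|_t\,du\ \le\ \int_0^1\|\dot\varphi(u)\|_0\,du\ =\ \rho(0,x,y)
\]
\emph{pathwise}. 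This immediately yields $c(t,X_t(0),X_t(1))\le c(0,x,y)$ a.s.\ for any non-decreasing $\varphi$, with no cut-locus analysis, no It\^o correction, and no martingale localization; part~(b) follows identically from $\|W^t\|_t\le e^{-kt/2}$.

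Your route via the It\^o differential of $\mathcal L(t)=\rho(t,X_t(0),X_t(1))$ has a genuine gap. The martingale increment of $X_t(1)$ is, by~\eqref{gPTX}, the parallel transport of $d_mX_t^0$ along the \emph{horizontal curve} $v\mapsto X_t(v)$, not along the $g(t)$-minimizing geodesic~$c$ between the endpoints. These curves coincide at $t=0$ but not afterwards, so for $t>0$ neither does the first-order martingale part of $d\mathcal L$ cancel, nor does the second-order It\^o correction reduce to the index-form bound $-\tfrac12\int_c\Ric^t(\dot c,\dot c)$: both pick up terms measuring the holonomy defect between the two transports. Even if you repair this by restarting at every~$s$ and arguing only with the upper Dini derivative (where the horizontal curve \emph{is} the geodesic), you still cannot pass from a supermartingale bound on $\mathcal L$ to one on $\varphi(\mathcal L)$ for an arbitrary non-decreasing $\varphi$, nor from $\mathcal L$ to $\mathcal L^p$ for $p>1$ without the It\^o correction $\tfrac{p(p-1)}{2}\mathcal L^{p-2}\,d\langle M\rangle$ going the wrong way. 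The pathwise control of $\|W^t\|$ is precisely what makes both parts work for general $\varphi$ and~$p$.
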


\begin{remark}
Before turning to the proof of Theorem \ref{WDF}, let us mention that in the case $Z=0$, $g$
constant, $p=2$ and $k=0$, item b) is due to \cite{Sturm-vonRenesse:05} and
\cite{Otto-Westdickenberg:05}. In the case where $g$ is a backward Ricci flow
solution, $Z=0$ and $p=2$, statement b) is due to Lott \cite{Lott:07} and
McCann-Topping~\cite{McCann-Topping:07}. 
For extensions about ${\mathcal L}$-transportation, see~\cite{Topping:07}.
\end{remark}

\begin{proof}[Proof of Theorem~\textup{\ref{WDF}}] \ 
  a) Assume that $\Ric^t-\dot g- 2(\n^t Z)^\flat\ge 0$. Then for any
  $L(t)$-diffusion $(X_t)$, we have
\begin{align*}
  d\big(g(t)&(W(X)_t,W(X)_t)\big)\\
  &=\dot g(t)\big(W(X)_t,W(X)_t\big)\,dt+2g(t)\left(D^tW(X)_t,W(X)_t\right)\\
  &=\dot g(t)\big(W(X)_t,W(X)_t\big)\,dt\\
&\quad+2g(t)\left(\n^t_{W(X)_t}Z(t,\newdot)-\f12(\Ric^t)^\sharp(W(X)_t),W(X)_t\right)dt\\
  &=\left(\dot g+2(\n^tZ)^\flat -\Ric^t\right)\big(W(X)_t,W(X)_t\big)\,dt\le 0.
\end{align*}
Consequently, for any $t\ge 0$,
\begin{equation}
 \label{nW}
\|W(X)_t\|_t\le \|W(X)_0\|_0=1.
\end{equation}
For $x,y\in M$, let $u\mapsto \g(x,y)(u)$ be a minimal $g(0)$-geodesic from
$x$ to $y$ in time~$1$: $\g(x,y)(0)=x$ and $\g(x,y)(1)=y$. Denote by
$X^{x,y}(u)$ a horizontal $L(t)$-diffusion with initial condition $\g(x,y)$.

For $\eta\in \Pi(\mu,\nu)$, define the measure $\eta_t$ on $M\times M$ by
$$
\eta_t(A\times B)=\int_{M\times M}\P\big\{X_t^{x,y}(0)\in A,\ X_t^{x,y}(1)\in
B\big\}\,d\eta (x,y),
$$
where $A$ and $B$ are Borel subsets of $M$. 
Then $\eta_t$  has marginals $\mu P_t$ and $\nu P_t$.
Consequently it is sufficient to prove that for any such $\eta$,
\begin{equation}
\label{tbp}
\int_{M\times M}\E\big[c(t,X_t^{x,y}(0),X_t^{x,y}(1))\big]\,d\eta (x,y)
\le \int_{M\times M}c(0,x,y)\,d\eta(x,y).
\end{equation}
On the other hand, we have a.s.,
\begin{align*}
  \rho(t,X_t^{x,y}(0),X_t^{x,y}(1))&\le \int_0^1\big\|\partial_uX_t^{x,y}(u)\big\|_t\,du\\
  &=\int_0^1\big\|W(X^{x,y}(u))_t\,\dot\g(x,y)(u)\big\|_t\,du\\
  &\le \int_0^1\big\|\dot\g(x,y)(u)\big\|_0\,du\\
  &=\rho(0,x,y),
\end{align*}
and this clearly implies
$$
c\big(t,X_t^{x,y}(0),X_t^{x,y}(1)\big)\le c(0,x,y)\quad \hbox{a.s.},
$$
and then \eqref{tbp}.

\smallbreak
b) Under condition \eqref{gRbk}, we have
$$
\f{d}{dt}\,g(t)\big(W(X)_t,W(X)_t\big)\le -k\,g(t)\big(W(X)_t,W(X)_t\big),
$$
which implies
$$
\|W(X)_t\|_t\le e^{-kt/2},
$$
and then
$$
\rho\big(t,X_t^{x,y}(0),X_t^{x,y}(1)\big)\le e^{-kt/2}\rho(0,x,y).
$$
The result follows.
\end{proof}

\section{Derivative process along constant rank diffusion}\label{Section5}
\setcounter{equation}0

In this Section we consider a generator $L$ of constant rank: the image $E$
of the ``carr\'e du champ'' operator $\G(L)\in \G(T M\otimes T M)$ defines a subbundle
of $TM$. In~$E$ we then have an intrinsic metric given by 
$$g(x)=\left(\G(L)|{E(x)}\right)^{-1},\quad x\in M.$$ 
Let $\n$ be a connection on
$E$ with preserves $g$, and denote by $\n'$ the associated semi-connection: if
$U\in \G(TM)$ is a vector field, $\n'_vU$ is defined only if $v\in E$ and
satisfies
$$
\n'_vU=\n_{U_{x_0}}V+[V,U]_{x_0}
$$
where $V\in \G(E)$ is such that $V_{x_0}=v$ 
(see \cite{Elworthy-LeJan-Li:99}, Section~1.3).  We denote by $Z(x)$ the drift of
$L$ with respect to the connection $\n$. 

For the construction of a flow of
$L$-diffusions we will use an extension of $\n$ to $TM$ denoted by $\tilde\n$.
Then the associated semi-connection $\n'$ is the restriction of the classical
adjoint of $\tilde\n$ (see \cite{Elworthy-LeJan-Li:99} Proposition~1.3.1).

\begin{remark}
  It is proven in \cite{Elworthy-LeJan-Li:99} that a connection $\n$ always
  exists, for instance, we may take the Le~Jan-Watanabe connection associated to a well chosen
  vector bundle homomorphism from a trivial bundle $M\times H$ to $E$ where
  $H$ is a Hilbert space.
\end{remark}

If $X_t$ is an $L$-diffusion, the parallel transport $$P(X)_t\colon E_{X_0}\to
E_{X_t}$$ along $X_t$ (with respect to the connection $\tilde \n$) depends only on $\n$. 
The same applies for the It\^o differential $dX_t=d^\n X_t$. 
We still denote by $d_mX_t$ its martingale part. 

We denote by $$\tilde P'(X)_t : T_{X_0}M\to
T_{X_t}M$$ the parallel transport along $X_t$ for the adjoint connection
$(\tilde\n)'$, and by $\tilde D'J$ the covariant differential (with respect to
$(\tilde\n)'$) of a semimartingale $J\in TM$ above $X$; compare \eqref{covder} for
the definition.

\begin{thm}
 \label{T03} We keep the notation and assumptions from above. 
 Let $x_0$ be a fixed point in $M$ and $X_t(x_0)$ an $L$-diffusion starting at~$x_0$. 
For $x\in M$ close to $x_0$, we define the $L$-diffusion $X_t(x)$, started at $x$, by
\begin{equation}
\label{IED}
dX_t(x)=\tilde P_{X_t(x_0),X_t(x)}\,d_mX_t(x_0)+Z(X_t(x))\,dt
\end{equation}
where $\tilde P_{x,y}$ denotes parallel transport (with respect to $\tilde\n$)
along the unique $\tilde \n$-geodesic from $x$ to $y$.  Then
\begin{equation}
\label{DTXd}
\tilde D'T_{x_0}X=\tilde\n_{T_{x_0}X}Z\,dt
-\f12\Ric^\sharp(T_{x_0}X)\,dt
\end{equation}
where $$\Ric^\sharp(u)=\sum_{i=1}^d \tilde R(u,e_i)e_i,\quad u\in T_xM,$$ and
$(e_i)_{i=1,\ldots,d}$ an orthonormal basis of $E_x$ for the metric~$g$.

Under the additional assumption that $Z\in \G(E)$, the differential $\tilde D'T_{x_0}X$ does not
depend on the extension $\tilde\n$, and we have
\begin{equation}
\label{DTXd2}
\tilde D'T_{x_0}X= \n_{T_{x_0}X}Z\,dt
-\f12\Ric^\sharp(T_{x_0}X)\,dt.
\end{equation}
\end{thm}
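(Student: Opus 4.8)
The strategy is to reduce Theorem~\ref{T03} to the computation already carried out in Step~5 of the proof of Theorem~\ref{T01}, working on the manifold $\tilde M$ in a way that keeps track only of the directional derivative at $x_0$ rather than a whole family of coupled diffusions. First I would observe that, because \eqref{IED} is exactly a parallel coupling of the form \eqref{EPC} (with $n\a$ replaced by the fixed point $x_0$ and geodesics taken for $\tilde\n$), the family $x\mapsto X_t(x)$ is a.s.\ $C^1$ in $x$ near $x_0$: the argument from Steps~1--4 of Theorem~\ref{T01} applies verbatim once one notes that ellipticity was only used on the subbundle $E$, and the coupling term $\tilde P_{X_t(x_0),X_t(x)}d_mX_t(x_0)$ lives in $E$ by construction. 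This gives a well-defined derivative process $T_{x_0}X\in T_{X_t}M$ above $X_t(x_0)$, a priori a $(\tilde\n)'$-martingale perturbed by a finite-variation term, so that $\tilde D'T_{x_0}X$ makes sense.

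\textbf{Key steps.} The heart of the matter is the covariant It\^o formula
\begin{equation*}
\tilde D'\,\partial X=\tilde\n_u\,dX-\tfrac12\,\tilde R(\partial X,dX)\,dX,
\end{equation*}
the analogue of \eqref{E1} for the adjoint connection $(\tilde\n)'$; I would invoke it with $\partial X=T_{x_0}X$ evaluated at $x=x_0$. Differentiating \eqref{IED} in $x$ at $x=x_0$, the coupling term contributes $\tilde\n_{T_{x_0}X}\tilde P_{x_0,\nbull}\,d_mX=0$ because $\tilde\n_v\tilde P_{x,\nbull}=0$ for $v\in T_xM$ (the same vanishing used in Step~2 of Theorem~\ref{T01}), so the martingale part of $\tilde\n_u\,dX$ drops out and one is left with $\tilde\n_u\,dX=\tilde\n_{T_{x_0}X}Z\,dt$. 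For the curvature term, since $X_t(x_0)$ is an $L$-diffusion the quadratic variation $dX\otimes dX$ is $g^{-1}$ restricted to $E$, i.e.\ $\sum_i e_i\otimes e_i\,dt$ with $(e_i)$ a $g$-orthonormal frame of $E$; hence $\tfrac12\tilde R(T_{x_0}X,dX)dX=\tfrac12\sum_i\tilde R(T_{x_0}X,e_i)e_i\,dt=\tfrac12\Ric^\sharp(T_{x_0}X)\,dt$ with $\Ric^\sharp$ as defined in the statement. Combining gives \eqref{DTXd}.

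\textbf{The second assertion.} For the refinement under $Z\in\Gamma(E)$, I would argue that the right-hand side of \eqref{DTXd} must be independent of the extension $\tilde\n$ because the left-hand side is: indeed $T_{x_0}X$ and the coupling \eqref{IED} depend only on $\n$ (the martingale part $d_mX$, the $E$-parallel transport $P(X)$, and the geodesics determining $\tilde P_{x,y}$ up to first order at the diagonal all do), and $(\tilde\n)'$ restricted to derivatives of such families reduces to the semi-connection $\n'$. Then, choosing the extension freely, one may evaluate $\tilde\n_{T_{x_0}X}Z$ using any $\tilde\n$; taking one for which $\tilde\n|_E=\n$ and using that $Z$ is a section of $E$, the term $\tilde\n_{T_{x_0}X}Z$ coincides with $\n_{T_{x_0}X}Z$ via the definition $\n'_vU=\n_{U_{x_0}}V+[V,U]_{x_0}$ of the semi-connection. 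Likewise $\Ric^\sharp$ is computed from $\tilde R$ only through its action $\tilde R(\cdot,e_i)e_i$ on $E$-vectors, which is controlled by $\n$; this yields \eqref{DTXd2}.

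\textbf{Main obstacle.} I expect the delicate point to be the extension-independence: one must check carefully that $\tilde D'T_{x_0}X$, a priori defined through the full connection $(\tilde\n)'$ on $TM$, really depends only on $\n$ and on $Z\in\Gamma(E)$, and that the two a priori different-looking curvature contributions agree. This is essentially the content of Proposition~1.3.1 of \cite{Elworthy-LeJan-Li:99} together with a bookkeeping of which objects in \eqref{IED} and its $x$-derivative are intrinsic to $E$; the computation of $\tilde R$ on mixed arguments (one in $E$, one arbitrary) is where the hypothesis $Z\in\Gamma(E)$ genuinely enters and where care is needed to see that no term involving the transverse part of $\tilde\n$ survives.
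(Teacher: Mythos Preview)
Your overall strategy---differentiate the coupling equation, kill the martingale part via $\tilde\n_v\tilde P_{x,\nbull}=0$, and read off the curvature term from the quadratic variation on $E$---is exactly the paper's. The gap is in the ``key step'': you invoke
\[
\tilde D'\,\partial X=\tilde\n_u\,dX-\tfrac12\,\tilde R(\partial X,dX)\,dX
\]
as ``the analogue of \eqref{E1} for the adjoint connection,'' but \eqref{E1} is valid because the Levi-Civita connection is torsion-free. Here $\tilde\n$ and $(\tilde\n)'$ both carry torsion, and the general commutation formula (Eq.~(7.4) of \cite{Arnaudon-Thalmaier:03}, which the paper quotes) reads
\[
\tilde D'T_{x_0}X=\tilde\n_{T_{x_0}X}\tilde P_{X_t(x_0),\nbull}\,d_mX_t(x_0)+\tilde\n_{T_{x_0}X}Z\,dt
-\tfrac12\bigl(\tilde R'(T_{x_0}X,dX)dX+\tilde\n'\tilde T'(dX,T_{x_0}X,dX)\bigr)
-\tfrac12\tilde T'(\tilde D'T_{x_0}X,dX),
\]
with the curvature $\tilde R'$ of the \emph{adjoint} connection and two torsion contributions. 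The paper's proof then does real work you have skipped: first, the vanishing of the $\tilde\n_{T_{x_0}X}\tilde P_{x_0,\nbull}$ term (which you do mention) forces $\tilde D'T_{x_0}X$ to have finite variation, and only \emph{then} does $\tilde T'(\tilde D'T_{x_0}X,dX)=0$; second, the identity
\[
\tilde R'(v,u)u+\tilde\n'\tilde T'(u,v,u)=\tilde R(v,u)u
\]
(identity~(C.17) in \cite{Elworthy-LeJan-Li:99}) is needed to collapse $\tilde R'+\tilde\n'\tilde T'$ into the single $\tilde R$ appearing in your formula. In other words, the clean mixed formula you wrote down is the \emph{conclusion} of the argument, not its starting point; without the torsion bookkeeping and the curvature identity it is unjustified.

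For the second assertion your argument that ``the left-hand side depends only on $\n$'' is not obviously correct: $\tilde D'$ is built from $(\tilde\n)'$, and the coupling \eqref{IED} uses $\tilde\n$-geodesics, so a priori both sides depend on the extension. The honest route (which the paper leaves mostly implicit) is to observe directly on the right-hand side of \eqref{DTXd} that when $Z\in\Gamma(E)$ one has $\tilde\n_{T_{x_0}X}Z=\n_{T_{x_0}X}Z$ (a connection on $E$ already allows differentiation in arbitrary $TM$-directions), and that $\Ric^\sharp(u)=\sum_i\tilde R(u,e_i)e_i$ with $e_i\in E$ is intrinsic to $\n$; this is cleaner than trying to argue intrinsicness of $\tilde D'T_{x_0}X$ abstractly.
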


\begin{proof}
 From \cite{Arnaudon-Thalmaier:03} Eq.~7.4 we have 
\begin{align*}
  \tilde D'T_{x_0}X&=\tilde \n_{T_{x_0}X}\tilde P_{X_t(x_0),\nbull }\,d_mX_t(x_0)+\tilde\n_{T_{x_0}X}Z\,dt\\
  &\quad-\f12\left(\tilde R'(T_{x_0}X,dX(x_0))dX(x_0)+\tilde \n' \tilde T'(dX({x_0}),T_{x_0}X,dX({x_0}))\right)\\
  &\quad-\f12\tilde T'(\tilde D'T_{x_0}X,dX)
\end{align*}
where $\tilde T'$ denotes the torsion tensor of $\tilde\nabla'$. 
Since for all $x\in M$, $\tilde \n_v \tilde P_{{x},\nbull}=0$ if $v\in T_{x}M$, 
the first term in the right vanishes. As a consequence, 
$\tilde D'T_{x_0}X$ has finite variation, and $T'(\tilde D'T_{x_0}X,dX)=0$.
Then using the identity
$$
\tilde R'(v,u)u+\tilde \n' \tilde T'(u,v,u)= \tilde R(v,u)u, \quad u,v\in T_xM,
$$
which is a particular case of identity (C.17) in \cite{Elworthy-LeJan-Li:99},
we obtain
$$
\tilde D'T_{x_0}X=\tilde\n_{T_{x_0}X}Z\,dt-\f12\tilde R(T_{x_0}X,dX(x_0))dX(x_0).
$$
Finally writting 
$$\tilde
R(T_{x_0}X,dX(x_0))dX(x_0)=\Ric^\sharp(T_{x_0}X)\,dt$$ 
yields the result.
\end{proof}

\begin{remark}
  In the non-degenerate case, $\n$ is the Levi-Civita connection associated to
  the metric generated by $L$, and we are in the situation of
  Section~\ref{Section2}. In the degenerate case, in
  general, $\n$ does not extend to a metric connection on $M$. 
  However conditions are given in
  \cite{Elworthy-LeJan-Li:99} (1.3.C) under which $P'(X)$ is adapted to some
  metric, and in this case $T_{x_0}X$ is bounded with respect to the metric.
  
  One would like to extend Theorem~\ref{T01} to degenerate diffusions of
  constant rank, by solving the equation $$\partial_uX(u)=
  \tilde\n_{\partial_uX(u)}Z\,dt -\f12\Ric^\sharp(\partial_uX(u))\,dt.$$
  Our proof does not work in this situation for two reasons. The first
  one is that in general $\tilde P'(X)$ is not adapted to a metric. 
  The second
  one is the lack of an inequality of the type \eqref{E27} since
  $\n$ does not have an extension $\tilde \n$ which is the Levi-Civita
  connection of some metric.
\end{remark}

\begin{remark}
  When $M$ is a Lie group and $L$ is left invariant, then $\tilde \n$ can be
  chosen as the left invariant connection. In this case $(\tilde\n)'$ is the
  right invariant connection, which is metric.
\end{remark}

%
%

\providecommand{\bysame}{\leavevmode\hbox to3em{\hrulefill}\thinspace}

\end{document}